\newcommand{\A}{\mathcal{A}}
\newcommand{\C}{\mathcal{C}}
\newcommand{\T}{\mathbb{T}}
\newcommand{\z}{\zeta}
\newcommand{\conj}[1]{\overline{#1}}
\newcommand{\D}{\mathbb{D}}
\newcommand{\hil}{\mathcal{H}}
\newcommand{\Hb}{\mathcal{H}(b)}
\newtheorem{thm}{Theorem}
\newtheorem*{thm*}{Theorem}  
\newtheorem*{conj*}{Conjecture}  
\newtheorem{lemma}[thm]{Lemma}
\newtheorem{prop}[thm]{Proposition}
\theoremstyle{definition}
\theoremstyle{definition}
\begin{document}
\title{\textbf{Density of disc algebra functions in de Branges-Rovnyak spaces}}
\author{ Alexandru Aleman and Bartosz Malman}
\date{ }

\maketitle

\newcommand{\Addresses}{{
		\bigskip
		\footnotesize
		
		Alexandru Aleman, \\ \textsc{Centre for Mathematical Sciences, Lund University, \\
		Lund, Sweden}\\
		\texttt{aleman@maths.lth.se}
		
		\medskip
		
		Bartosz Malman, \\ \textsc{Centre for Mathematical Sciences, Lund University, \\
				Lund, Sweden} \\
			\texttt{bartosz.malman@math.lu.se}
		
	}}

\begin{abstract}
	We prove that functions continuous up to the boundary are dense in de Branges-Rovnyak spaces induced by extreme points the unit ball of $H^\infty$.
\end{abstract}
	
\section{Introduction}

Let   $H^\infty$ be  the algebra of bounded analytic functions in the unit disk $\mathbb{D}$ in the complex plane, and denote by $\A$ the disc algebra,  i.e. the subalgebra of $H^\infty$ consisting of functions which extend continuously to the closed disk.
The  Hardy space $H^2$ consists of  power series in $\mathbb{D}$ with square-summable  coefficients. If $\T$ denotes the unit circle, we identify  as usual $H^2$ with the closed subspace of $L^2(\T)$ consisting of functions  whose negative Fourier coefficients vanish. The orthogonal projection from $L^2(\T)$ onto $H^2$ is denoted by $P_+$.
 
For  $\phi \in L^\infty(\T)$ let $T_\phi$ denote the Toeplitz operator on $H^2$  defined  by $T_\phi f = P_+ \phi f$.  Given $b\in H^\infty$ with $\|b\|_\infty \leq 1$ we define the corresponding \textit{de Branges-Rovnyak space} $\Hb$  as $$\Hb = (1 - T_b T_{\conj{b}})^{1/2}H^2.$$ $\Hb$ is endowed with  the unique norm which makes the operator $(1 - T_b T_{\conj{b}})^{1/2}$ a partial isometry from $H^2$ onto $\Hb$. Alternatively, $\Hb$ is defined as the reproducing kernel Hilbert space with  kernel  $$k_b(z,\lambda) = \frac{1-\conj{b(\lambda)}b(z)}{1-\conj{\lambda}z}.$$  

 $\Hb$-spaces are naturally split into  two classes with fairly different structures according to whether the quantity $\int_{\T} \log(1-|b|) \,dm$ is finite or not. Here $m$ denotes the normalized arc-length measure on $\T$. The present note concerns the approximation of $\Hb$-functions  by functions in $\A\cap \Hb$ and from the technical point of view there is a major difference between the two classes, which we shall briefly explain.
 
 If  $\int_{\T} \log(1-|b|) \,dm<\infty$, or equivalently,  if $b$  is a non-extreme point of the unit ball of $H^\infty$ (see \cite{sarasonbook}), then $\Hb$ contains all functions analytic  in a neighborhood of the closed unit disk. By a theorem of Sarason, the polynomials form a norm-dense subset of the space \cite{sarasonbook}. An interesting feature of the proofs of density of polynomials in an $\Hb$-space is that the usual approach of approximating a function $f$ first by its dilations $f_r(z) = f(rz)$, and then by their truncated Taylor series, or by their Ces\`aro means, does not work. Sarason's intial proof of density of polynomials is based on  a duality argument. In recent years a more involved constructive polynomial approximation scheme has been obtained in \cite{constr}.

The picture changes dramatically  in the case when $\int_{\T} \log(1-|b|) \,dm=\infty$, or equivalently when $b$ is an extreme point of the unit ball of $H^\infty$. Then  it is in general a difficult task to identify any functions in the space other than the reproducing kernels, and it might  happen that $\Hb$ contains no non-zero function analytic in a neighborhood of the closed disk. A special class of extreme points are the inner functions. If $b$ is inner then  $\Hb = H^2 \ominus bH^2$ with equality of norms,  and it is a consequence of a celebrated theorem of Aleksandrov  \cite{aleksandrovinv} that in this case the intersection $\A \cap \Hb$ is dense in the space. The result is surprising since, as pointed out above, in most cases it is not obvious at all that $\Hb$ contains any non-zero function in the disk algebra $\A$.

Motivated  by the situation described here, E. Fricain \cite{openprob}, raised the natural question whether  Aleksandrov's result extends to all other $\Hb$-spaces induced by extreme points $b$ of the unit ball of $H^\infty$. It is the purpose of this note to provide an affirmative answer to this question, contained in the main result below.

\begin{thm} \thlabel{theorem}
	If $b$ is  an extreme point of the unit ball of $H^\infty$ then $\A \cap \Hb$ is a dense subset of $\Hb$. 
\end{thm}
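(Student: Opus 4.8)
The plan is to reduce the statement to an orthogonality criterion and then to manufacture continuous approximants out of the boundary behaviour of $b$. Since finite linear combinations of the reproducing kernels $k_b(\cdot,\lambda)$, $\lambda\in\D$, span a dense subspace of $\Hb$, it suffices to approximate these in norm by elements of $\A\cap\Hb$; equivalently, working in the Hilbert space $\Hb$, it is enough to show that the only $g\in\Hb$ with $\ip{h}{g}=0$ for every $h\in\A\cap\Hb$ is $g=0$. I would first record the trivial case: when $b\in\A$ the factor $1-\conj{b(\lambda)}b(z)$ is continuous on $\cD$ and $\tfrac{1}{1-\conj{\lambda}z}$ is continuous there for $\lambda\in\D$, so every interior kernel already lies in $\A\cap\Hb$. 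The entire difficulty is thus concentrated in the case $b\notin\A$, where the kernels are genuinely discontinuous and membership of a continuous function in $\Hb$ can arise only through cancellation in the quotient defining $k_b$.

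Next I would import the one case that is already understood and try to run its machinery in general. For inner $b$, Aleksandrov's theorem gives the result, and its mechanism is Clark's theorem: the boundary kernels $k_b(\cdot,\z)$ at the atoms $\z$ of the Clark measures (the points where $b$ has a finite angular derivative) are continuous on $\cD$ and assemble into orthogonal bases of $\Hb$. The natural strategy for general extreme $b$ is to feed the Aleksandrov--Clark measures $\mu_\alpha$, defined by $\frac{1-\abs{b}^2}{\abs{\alpha-b}^2}=\int_\T \frac{1-\abs{z}^2}{\abs{\z-z}^2}\,d\mu_\alpha(\z)$, together with the normalized Cauchy transforms $V_{\mu_\alpha}\colon L^2(\mu_\alpha)\to\Hb$ and the Aleksandrov disintegration $\int_\T\mu_\alpha\,dm(\alpha)=m$, into the same argument. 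The singular parts of the $\mu_\alpha$ again produce boundary kernels continuous on $\cD$, so the contribution of those parts should be approximable inside $\A\cap\Hb$ exactly as in the inner case; I would isolate and dispose of this part first.

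The main obstacle is the absolutely continuous part of the Clark measures, which vanishes identically precisely when $b$ is inner and is the whole new phenomenon here; the hardest subcase is an extreme \emph{outer} $b$, where there is no inner structure to exploit and none of the contractive containments $\mathcal{H}(\Theta)\subseteq\Hb$—valid whenever the inner function $\Theta$ divides $b$, since then $k_b-k_\Theta=\conj{\Theta(\lambda)}\Theta(z)\,k_{b/\Theta}(z,\lambda)\geq0$ as kernels—are available. This is exactly where the extremality of $b$, i.e. the divergence of $\int_\T\log(1-\abs{b})\,dm$, must enter: in the non-extreme regime the Pythagorean mate $a$ with $\abs{a}^2+\abs{b}^2=1$ organizes the theory and Sarason's polynomial density applies, whereas its absence is what forces the present difficulty. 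I expect the decisive step to be showing that $V_{\mu_\alpha}h$ is continuous on $\cD$ for sufficiently regular $h$ (continuous densities, or trigonometric polynomials) and that, after integrating in $\alpha$, such transforms are norm-dense in $\Hb$; establishing this continuity and density for the absolutely continuous part—using the divergence of $\int_\T\log(1-\abs{b})\,dm$ to rule out a nonzero annihilator $g$—is the crux on which the whole argument turns.
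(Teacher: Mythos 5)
Your opening reduction---that it suffices to show the only $g\in\Hb$ orthogonal to $\A\cap\Hb$ is $g=0$---is correct and matches the paper's starting point, but past that the proposal is a plan rather than a proof, and the plan rests on a false premise. You describe the mechanism of Aleksandrov's inner-case theorem as follows: the boundary kernels $k_b(\cdot,\z)$ at the atoms of the Clark measures are continuous on $\cD$ and assemble into orthogonal bases of $\Hb$. That is not how it works. The kernel $k_b(\cdot,\z)=(1-\conj{b(\z)}b)/(1-\conj{\z}z)$ can be continuous on $\cD$ only if $b$ itself is continuous on $\T\setminus\{\z\}$, and a generic inner function (a Blaschke product whose zeros accumulate at every point of $\T$, or a singular inner function whose measure has full support) is continuous at no point of $\T$; for such $b$ no boundary kernel lies in $\A$. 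Aleksandrov's theorem is celebrated precisely because $\mathcal{H}(\Theta)=H^2\ominus\Theta H^2$ contains no obvious disc-algebra functions, and his proof is a nonconstructive duality argument, not an assembly of continuous Clark kernels. Consequently your plan to ``dispose of the singular part exactly as in the inner case'' has no base case to run on, and the decisive step you propose for the absolutely continuous part---continuity of $V_{\mu_\alpha}h$ on $\cD$ for regular $h$, together with norm density after integrating in $\alpha$---is offered only as an expectation, with no argument. It is moreover doubtful on its face: were such continuity available, the inner case itself would admit an easy constructive proof, which it notoriously does not; the boundary behaviour of normalized Cauchy transforms is delicate. So the gap is exactly the step you yourself call the crux.

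For comparison, the paper kills the annihilator by pure duality, with no Clark theory at all. \thref{normformula} embeds $\Hb$ isometrically into $H^2\oplus L^2(E)$, $E=\{\z\in\T:|b(\z)|<1\}$, via $Jf=(f,g)$, and identifies $J(\Hb)^\perp=\{(bh,\sqrt{1-|b|^2}h):h\in H^2\}$; extremality of $b$ enters here, since $\int_\T\log(1-|b|)\,dm=-\infty$ is what forces the relevant projection to be injective. Viewing $H^2\oplus L^2(E)$ inside $\C\oplus L^2(E)$, the dual of $\A\oplus L^2(E)$, the annihilator of $J(\A\cap\Hb)$ is the weak-* closure of the functionals $l_h=(hb,h\sqrt{1-|b|^2})$, and \thref{wsclosed}---proved with Vinogradov's theorem and the Khintchin--Ostrowski theorem---shows this closure stays inside the set $S$ of pairs $(C\mu,h)$ with $C\mu/b\in N^+(\D)$ and $C\mu/b=h/\sqrt{1-|b|^2}$ a.e.\ on $E$. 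Hence any $f\in\Hb$ orthogonal to $\A\cap\Hb$ satisfies $Jf\in S$, which puts $Jf\in J(\Hb)^\perp$ and gives $f=0$. If you want to pursue your Clark-measure route you would need a genuinely new idea to replace the unproven continuity claim; as written, the argument does not close.
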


\noindent Together with Sarason's result \cite{sarasonbook} on the density of polynomials  in the non-extreme case, it follows that the intersection $\A \cap \Hb$ is  dense in the space $\Hb$ for any $b$ in the unit ball of $H^\infty$. Our proof Theorem \ref{theorem} is deferred to Section 3 and relies on a duality argument. Therefore, just as the earlier proofs of Sarason and Aleksandrov, our approach is non-constructive. Section 2 serves to a  preliminary purpose.

\section{Preliminaries}
\subsection{The norm on $\Hb$.} An essential  step is the following useful representation of the norm in $\Hb$. The authors have originally deduced the result using the techniques in \cite{ars} (see also \cite[Chapter 3]{afr}), but once the  goal is identified, several available  techniques provide simpler proofs. For example,  the
 proposition below can be deduced from results  in \cite{sarasonbook}. For the sake completeness, we include a new  shorter proof.

\begin{prop} \thlabel{normformula}
Let $b$ be an extreme point of the unit ball of $H^\infty$ and let $$E = \{ \zeta \in \T : |b(\zeta)| < 1 \}.$$ Then for $f\in \Hb$
the equation
 $$P_+ \conj{b}f = -P_+ \sqrt{1-|b|^2}g.$$
has a unique solution $g\in L^2(E)$, and the map $J:\Hb\to H^2\oplus L^2(E)$  defined by $$Jf=(f,g),$$ 
is an isometry. Moreover,  $$J(\Hb)^\perp = \Big\{ (bh, \sqrt{1-|b|^2}h) : h \in H^2 \Big\}.$$ 
\end{prop}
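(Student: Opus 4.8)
The plan is to model $\Hb$ as the orthogonal complement of an explicit subspace of $H^2\oplus L^2(E)$ and then to match reproducing kernels. Writing $\Delta=\sqrt{1-\abs{b}^2}$, I would first consider the map $U\colon H^2\to H^2\oplus L^2(E)$, $Uh=(bh,\Delta h)$. Because $\abs{b}^2+\Delta^2=1$ a.e., one has $\|Uh\|^2=\int_\T\abs{b}^2\abs{h}^2\,dm+\int_E(1-\abs{b}^2)\abs{h}^2\,dm=\|h\|^2$, so $U$ is an isometry and $M:=U(H^2)$ is closed; this $M$ is the candidate for $J(\Hb)^\perp$. A one-line duality computation, $\ip{(f,g)}{(bh,\Delta h)}=\ip{P_+\conj{b} f+P_+\Delta g}{h}$, then shows that $(f,g)\in M^\perp$ precisely when $P_+\conj{b} f=-P_+\sqrt{1-\abs{b}^2}\,g$, which is exactly the equation in the statement. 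Thus the entire proposition reduces to showing that the first-coordinate map $\pi_1(f,g)=f$ is a bijective isometry from $M^\perp$ onto $\Hb$; granting this, $J=\pi_1^{-1}$ and $J(\Hb)^\perp=(M^\perp)^\perp=M$ deliver the displayed formula for free.

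The main obstacle, and the only point where extremality of $b$ is used, is the uniqueness of $g$ --- equivalently, the injectivity of $\pi_1$ on $M^\perp$, i.e. the implication $g\in L^2(E),\ P_+\Delta g=0\Rightarrow g=0$. The hypothesis $P_+\Delta g=0$ says $\Delta g\perp H^2$, so $q:=\conj{\Delta g}$ lies in $H^2$ and $\abs{q}=\Delta\abs{g}$ a.e. Suppose $q\not\equiv0$. Then $\log\abs{q}\in L^1(\T)$, and in particular $q\neq0$ a.e., which forces $\Delta>0$ a.e. and $g\neq0$ a.e.; hence the pointwise identity $\log\Delta=\log\abs{q}-\log\abs{g}$ holds a.e. Since $g\in L^2$ gives $\log^+\abs{g}\in L^1$ while $\log\abs{q}\in L^1$, the right-hand side is an integrable function plus the nonnegative term $\log^-\abs{g}$, so $\int_\T\log\Delta\,dm>-\infty$. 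This contradicts extremality, which asserts $\int_\T\log\Delta\,dm=\tfrac12\int_\T\log(1-\abs{b}^2)\,dm=-\infty$. Therefore $q\equiv0$, whence $\Delta g=0$ and $g=0$. This log-integrability dichotomy is the heart of the proof.

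For surjectivity and the isometry property I would avoid solving the equation directly and instead compute a reproducing kernel. Equip $\mathcal R:=\pi_1(M^\perp)$ with the norm transported from $M^\perp$ (well defined by the previous step); since $M^\perp$ is complete and $f(\lambda)=\ip{(f,g)}{(k_\lambda,0)}$ is a bounded functional, where $k_\lambda(z)=(1-\conj{\lambda} z)^{-1}$ denotes the Szeg\H{o} kernel, the space $\mathcal R$ is a reproducing kernel Hilbert space of analytic functions on $\D$ with kernel $\pi_1 P_{M^\perp}(k_\lambda,0)$. Using $P_{M^\perp}=I-UU^*$, the formula $U^*(F,g)=T_{\conj{b}}F+P_+\Delta g$, and the eigenrelation $T_{\conj{b}}k_\lambda=\conj{b(\lambda)}k_\lambda$, one finds $P_{M^\perp}(k_\lambda,0)=\big((1-\conj{b(\lambda)}b)k_\lambda,\,-\conj{b(\lambda)}\Delta k_\lambda\big)$, whose first coordinate is exactly $k_b(\cdot,\lambda)$. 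Since a reproducing kernel Hilbert space is determined by its kernel, $\mathcal R=\Hb$ with equality of norms. In particular every $f\in\Hb$ is the first coordinate of a (unique) element of $M^\perp$, which simultaneously yields existence of $g$, the fact that $J$ is an isometry of $\Hb$ onto $M^\perp$, and the identity $J(\Hb)^\perp=M$.
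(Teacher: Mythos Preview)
Your proof is correct and follows essentially the same route as the paper: define $M=\{(bh,\Delta h):h\in H^2\}$, show that the first-coordinate projection is injective on $M^\perp$ using the extremality hypothesis and a log-integrability argument, transport the norm to $\pi_1(M^\perp)$, and identify this space with $\Hb$ by matching reproducing kernels. Your presentation is somewhat more explicit than the paper's (you spell out that $U$ is an isometry, give the duality computation linking $M^\perp$ to the equation in the statement, and compute the kernel via $P_{M^\perp}=I-UU^*$ rather than by a direct orthogonal decomposition), but the architecture and the key ideas are the same.
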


\begin{proof}
	Let $$K = \Big\{ (bh, \sqrt{1-|b|^2}h) : h \in H^2 \Big\},$$
and let $P_1$ be the projection from $H^2 \oplus L^2(E)$ onto the first coordinate $H^2$, i.e., $P_1(f,g) = f$. We observe first that $P_1|K^\perp$ is injective. Indeed, if $K^\perp$ contains a tuple of the form $(0,g) \in H^2 \oplus L^2(E)$,  it  follows that 
$$\int_{\T} \conj{\zeta}^ng(\zeta)\sqrt{1-|b(\zeta)|^2} \,dm(\zeta) = 0, \, n \geq 0,$$ and consequently
the function $g\sqrt{1-|b|^2}$ coincides a.e. with the boundary values of the complex conjugate of a function $f\in H^2_0$. But the assumption that $b$ is an extreme point then implies that $\int_\T \log|f| \,dm = -\infty$, and since $f\in H^2$, we conclude that $f = 0$, i.e, $g = 0$. Thus, the space $\hil = P_1K^\perp$  with the norm $\|f\|_\hil = \|P_1^{-1}f\|_{H^2 \oplus L^2(E)}$ is a Hilbert space of analytic functions on $\D$, contractively contained in $H^2$, in particular, it is a reproducing kernel Hilbert space. We now show that $\hil$ equals $\Hb$ by verifying that  the reproducing kernels of the two spaces coincide. This follows from a simple computation. For $\lambda\in \D$, the tuple
 \begin{align*}(f_\lambda,g_\lambda)&=\Big( \frac{1-\conj{b(\lambda)}b(z)}{1-\conj{\lambda}z}, -\frac{\conj{b(\lambda)}\sqrt{1-|b(z)|^2}}{1-\conj{\lambda}z}\Big) \\&= \Big(\frac{1}{1-\conj{\lambda}z},0\Big) - \Big( \frac{\conj{b(\lambda)}b(z)}{1-\conj{\lambda}z}, \frac{\conj{b(\lambda)}\sqrt{1-|b(z)|^2}}{1-\conj{\lambda}z}\Big)\end{align*}
is obviously orthogonal to $K$, while the last tuple on the right hand side is in $K$, so that $f_\lambda$ is the reproducing
kernel in $\hil$, which obviously equals the reproducing kernel in $\Hb$. The first assertion in the statement is now self-explanatory.
\end{proof}

\subsection{The Khintchin-Ostrowski theorem.}  Recall that analytic functions $f$ in $\D$ satisfy $\sup_{0<r<1} \int_\T \log^+|f_r| dm<\infty$ if and only if they are quotients of  $H^\infty$-functions, in particular they have finite nontangential limits a.e. on $\T$ which define a boundary function denoted also by $f$. The class $N^+(\D)$ consists of quotients of $H^\infty$-functions such that the denominator can be chosen to be outer, it contains all Hardy spaces $H^p, ~p>0$. The Khintchin-Ostrowski theorem reads as follows. A proof can be found in \cite{havinbook}.
\begin{thm} \thlabel{ostrowski}
	Let $\{f_n\}$ be a sequence of functions analytic in the unit disk satisfying the following conditions:
	\begin{enumerate}[(i)]
		\item There exists a constant $C > 0$ such that $$\int_\T \log^+(|f_n(re^{it})|) dt \leq C.$$
		\item On some set $E$ of positive measure, the sequence $f_n$ converges in measure to a function $\phi$.		
	\end{enumerate}
	Then the sequence $f_n$ converges uniformly on compact subsets of the unit disk to a function $f$  which satisfies $f=\phi$ a.e. on $E$.
\end{thm}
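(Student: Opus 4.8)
The plan is to combine a normal-families argument based on condition (i) with a boundary-identification argument based on condition (ii), the latter being reduced to the case of uniformly bounded functions.

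First I would extract the interior convergence from (i) alone. Since $\log^+|f_n|$ is subharmonic, the sub-mean value inequality over the circle of radius $r$ gives, for $|z|\le\rho<r<1$, the estimate $\log^+|f_n(z)| \le \frac{r+\rho}{r-\rho}\cdot\frac{1}{2\pi}\int_0^{2\pi}\log^+|f_n(re^{it})|\,dt \le \frac{r+\rho}{r-\rho}\cdot\frac{C}{2\pi}$, and letting $r\to 1$ yields a bound on $|f_n(z)|$ uniform in $n$ on each compact subset of $\D$. Hence $\{f_n\}$ is a normal family. Every subsequential locally uniform limit $f$ is analytic and, by Fatou's lemma applied on each circle, again satisfies (i); in particular $f$ lies in the Nevanlinna class and has nontangential boundary values a.e. It therefore suffices to prove that every such limit satisfies $f=\phi$ a.e. on $E$: two limits would then agree on the positive-measure set $E$, forcing them to coincide (their difference being a Nevanlinna function vanishing on a set of positive measure), and the whole sequence would converge.

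The core is thus to show that a locally uniform limit $f$ reproduces $\phi$ on $E$, and here condition (ii) is indispensable: interior convergence by itself says nothing about boundary values, as the powers $S^k$ of a singular inner function $S$ tend to $0$ locally uniformly while $|S^k|=1$ a.e. on $\T$. I would first dispose of the \emph{bounded} case, say $|f_n|\le M$. Then the limit $f$ lies in $H^\infty$, and since $f_n\to f$ locally uniformly with a uniform bound, the boundary functions converge weakly in $H^2$ to $f^*$; on the other hand $f_n^*\to\phi$ in measure on $E$ together with the uniform bound gives $f_n^*\mathbf{1}_E\to\phi\,\mathbf{1}_E$ in $L^2(E)$. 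Testing both limits against functions supported on $E$ and using the identity $\langle f_n^*,\xi\rangle=\langle f_n,P_+\xi\rangle$ to match the two weak limits then yields $f^*=\phi$ a.e. on $E$.

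It remains to reduce the general Nevanlinna case to the bounded one. Because $\phi$ is finite a.e. and $f_n\to\phi$ in measure, the functions $f_n$ are large only on subsets of $E$ of small measure; the plan is to truncate the $f_n$ by a bounded analytic modification compatible with convergence in measure, so that the truncated sequence still converges in measure on $E$ to the corresponding truncation of $\phi$ and the bounded case applies, recovering $f=\phi$ on $E$ after removing the truncation. I expect this reduction to be the main obstacle: the difficulty is to perform the truncation while retaining control of the boundary behavior, i.e. to rule out the singular-inner phenomenon above, which is exactly the point where convergence in measure on $E$—rather than mere interior convergence—is what drives the argument.
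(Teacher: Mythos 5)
The paper never proves this statement: it is the classical Khintchin--Ostrowski theorem, quoted verbatim with a pointer to \cite{havinbook}, so there is no internal argument to compare yours against and it must be judged on its own merits. The parts you actually carry out are correct. The subharmonicity estimate $\log^+|f_n(z)|\le \frac{r+\rho}{r-\rho}\cdot\frac{1}{2\pi}\int_0^{2\pi}\log^+|f_n(re^{it})|\,dt$ does give local uniform boundedness, hence normality; the reduction of convergence of the whole sequence to uniqueness of subsequential limits is legitimate, since a nonzero function of bounded characteristic cannot have boundary values vanishing on a set of positive measure; and in the uniformly bounded case, matching the weak $H^2$ limit of $f_n$ (from coefficientwise convergence plus boundedness) against the strong $L^2(E)$ limit of the boundary traces (from convergence in measure plus the uniform bound) correctly yields $f^*=\phi$ a.e.\ on $E$.

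The gap is precisely the step you defer and call the ``main obstacle'': the reduction of the general case to the bounded one. This is not a residual technicality --- it is the entire content of the theorem, and the truncation plan you sketch cannot be carried out in any straightforward way. There is no analytic truncation: $f_n/(1+\varepsilon f_n)$ has poles, and multiplying $f_n$ by an inner or outer factor that tames $|f_n^*|$ destroys hypothesis (ii), because the correcting factor's boundary \emph{phase} is nonlocal and need not converge in measure on $E$ --- the same phase obstruction as in your $S^k$ example, or already in $f_n=z^n$. Worse, condition (i) does not put $f_n$ in $N^+(\D)$: the function $1/S$, with $S$ singular inner, satisfies (i) yet is unbounded in $\D$ while $|(1/S)^*|=1$ a.e., so even a uniform bound on boundary values would not bound the functions inside the disk; and your bounded-case argument genuinely requires a uniform sup norm bound, which no normalization (e.g.\ replacing $f_n$ by dilations $f_n(r_nz)$, which preserves (i), (ii) and the interior limit) can produce. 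The known proofs do not truncate at all; they use a quantitative two-constants estimate extracted from the canonical factorization: if $g$ is analytic with $\sup_{r<1}\int_\T \log^+|g(r\zeta)|\,dm(\zeta)\le A$ and $|g^*|\le\varepsilon<1$ on a set $F$ with $m(F)\ge\delta$, then $\log|g(z)|\le \frac{1+\rho}{1-\rho}\,A+\frac{1-\rho}{1+\rho}\,\delta\log\varepsilon$ for $|z|\le\rho$; the point is that the bound on the positive part of the representing measure --- which is exactly what (i) provides --- controls the singular factors that defeat your truncation. Applied to the differences $f_n-f_m$, whose boundary values are uniformly small on a large subset of $E$ by convergence in measure and Egorov's theorem, this estimate gives the locally uniform convergence, and the boundary identification in the unbounded case rests on harmonic-measure arguments of the same kind. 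Without such an estimate, your outline proves the theorem only under the additional hypothesis $\sup_n\|f_n\|_{H^\infty}<\infty$.
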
	
\section{Proof of the main result} Due to Proposition \ref{normformula} we can now implement Aleksandrov's strategy from  \cite{aleksandrovinv}  which will then be combined with the  Khintchin-Ostrowski theorem.\\
Recall that the dual $\A'$  of the disk algebra $\A$ can  be identified with the space $\C$ of Cauchy transforms of finite measures on $\T$ (\cite{cauchytransform}) via the pairing $$\langle f,C\mu\rangle=\lim_{r\to 1^-}\int_\T f(\z)\conj{C\mu(r\z)}dm(\z)=\int_\T fd\conj{\mu},$$
where $$C\mu(z)=\int_\T\frac1{1-z\conj{\z}}d\mu(\z)$$
is the Cauchy transform of $\mu$. The space $\C$ is endowed with the obvious quotient norm and is continuously contained in all $H^p$ spaces for $0<p<1$. The following result extends  Alexandrov's approach to the context of $\Hb$-spaces, when $b$ is extremal in the unit ball of $H^\infty$.
	
\begin{lemma} \thlabel{wsclosed}
	 Let $E = \{\z \in \T : |b(\z)| < 1\}$, $B = \mathcal{A} \oplus L^2(E)$ and $B' = C \oplus L^2(E)$. Then the set $$S = \{ (C\mu, h) : C\mu/b \in N^+(\D), C\mu/b = h/\sqrt{1-|b|^2} \mbox{ a.e. on } E \}$$ is weak-* closed in $B'$.
\end{lemma}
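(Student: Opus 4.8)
**Setting up the weak-* topology.** The plan is to prove weak-* closedness by invoking the Krein-Smulian theorem: since $B' = \C \oplus L^2(E)$ is the dual of the Banach space $B = \A \oplus L^2(E)$, it suffices to show that the intersection of $S$ with every closed ball of $B'$ is weak-* closed, and since $B$ is separable, this reduces to showing that $S$ is sequentially weak-* closed on bounded sets. So I would take a sequence $(C\mu_n, h_n) \in S$, bounded in $B'$, converging weak-* to some $(C\mu, h) \in B'$, and prove that the limit again lies in $S$.

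Let me think about what I need to derive from weak-* convergence.

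**Extracting convergence of the relevant functions.** From weak-* convergence I would first record that $h_n \to h$ weakly in $L^2(E)$ (this is the second coordinate, and $L^2(E)$ is its own dual with the usual pairing). For the first coordinate, weak-* convergence of $C\mu_n \to C\mu$ in $\C = \A'$ means $\langle f, C\mu_n \rangle \to \langle f, C\mu \rangle$ for every $f \in \A$; by the pairing formula recalled before the lemma this says $\int_\T f \, d\conj{\mu_n} \to \int_\T f \, d\conj{\mu}$, i.e. the measures $\mu_n$ converge weak-* to $\mu$ as functionals on $\A$. The key point is that weak-* convergence in $\C$ forces pointwise convergence of the Cauchy transforms: for each fixed $\lambda \in \D$ the function $\z \mapsto (1 - \lambda\conj{\z})^{-1}$ lies in $\A$, so $C\mu_n(\lambda) \to C\mu(\lambda)$ pointwise on $\D$, and by boundedness this convergence is uniform on compact subsets.

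**Applying Khintchin-Ostrowski to the quotients.** The heart of the argument, and the step I expect to be the main obstacle, is showing that the limit satisfies the two structural conditions defining $S$: namely that $C\mu/b \in N^+(\D)$ and that $C\mu/b = h/\sqrt{1-|b|^2}$ a.e. on $E$. Set $F_n = C\mu_n/b$; by hypothesis each $F_n \in N^+(\D)$, and since $C\mu_n \to C\mu$ uniformly on compacta while $b$ is a fixed nonvanishing-off-its-zero-set $H^\infty$ function, $F_n \to C\mu/b$ uniformly on compact subsets of $\D$. I would like to apply \thref{ostrowski} to the $F_n$. This requires a uniform bound $\int_\T \log^+|F_{n,r}| \, dm \leq C$; I expect to obtain this from the uniform $\C$-norm bound together with the continuous containment $\C \hookrightarrow H^p$ for $0 < p < 1$ (which controls $\int \log^+|C\mu_n|$) and control of $\log(1/|b|)$ coming from the extreme-point hypothesis — this is the delicate technical estimate to nail down. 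Granting the hypotheses of \thref{ostrowski}, the theorem yields that $C\mu/b$ is itself the uniform-on-compacta limit of the $F_n$ and, crucially, that its boundary values on $E$ agree with the boundary values of the limiting data. On $E$ we have $F_n = h_n/\sqrt{1-|b|^2}$ a.e.; combining the weak $L^2(E)$-convergence of $h_n$ (which I would upgrade to a.e. convergence of a subsequence) with the Khintchin-Ostrowski conclusion identifies the boundary function of $C\mu/b$ on $E$ with $h/\sqrt{1-|b|^2}$, and simultaneously certifies that $C\mu/b$ has the bounded-characteristic growth placing it in $N^+(\D)$. This shows $(C\mu, h) \in S$, completing the proof.
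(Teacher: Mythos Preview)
Your reduction to sequential weak-* closedness and the plan to invoke Khintchin--Ostrowski match the paper's approach, but there is a genuine gap where you claim that the theorem ``certifies that $C\mu/b$ has the bounded-characteristic growth placing it in $N^+(\D)$''. Khintchin--Ostrowski only gives locally uniform convergence and identifies boundary values on $E$; at best (via Fatou) the limit lies in the Nevanlinna class $N$. Membership in the Smirnov class $N^+$ is strictly stronger and does \emph{not} pass to locally uniform limits even under a uniform $\log^+$ bound: if $I$ is a singular inner function with associated singular measure $\sigma$ and $r_n\uparrow 1$, then $F_n(z)=1/I(r_nz)\in H^\infty\subset N^+$, $F_n\to 1/I$ locally uniformly, and $\int_\T\log^+|F_n(\rho e^{it})|\,dm=\sigma(\T)$ for every $n$ and $\rho$, yet $1/I\notin N^+$. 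So your argument, applied to $F_n=C\mu_n/b$, cannot conclude $C\mu/b\in N^+$. (Incidentally, your appeal to the extreme-point hypothesis for control of $\log(1/|b|)$ is misplaced: that hypothesis concerns $\log(1-|b|)$, while $\log|b|\in L^1(\T)$ holds for any nonzero $b\in H^\infty$. This is fixable and minor compared with the $N^+$ issue.)

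The paper separates the two conclusions. For $C\mu/b\in N^+$ it factors $b=I_bO_b$ and invokes Vinogradov's theorem: since each $C\mu_n/I_b\in N^+$, Vinogradov gives $C\mu_n/I_b\in\C$ with norm controlled by $\|C\mu_n\|_\C$, so the pointwise limit $C\mu/I_b$ lies in $\C\subset N^+$, and dividing by the outer factor $O_b$ keeps one in $N^+$. For the boundary relation on $E$ it applies Khintchin--Ostrowski to $C\mu_n$ itself rather than to $C\mu_n/b$; the $\log^+$ bound is then immediate from $\C\hookrightarrow H^p$ for $0<p<1$, and convergence in measure on $E$ follows from $C\mu_n=b\,h_n/\sqrt{1-|b|^2}$ together with a.e.\ convergence of a subsequence of convex combinations of the $h_n$. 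The appeal to Vinogradov's theorem is the ingredient your outline is missing.
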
 

\begin{proof}
	Since $\mathcal{A} \oplus L^2(E)$ is separable, it will be sufficient to show that $S$ is weak-* sequentially closed. Let $(C\mu_n, h_n)$ converge weak-* to $(C\mu, h)$, where $(C\mu_n, h_n) \in S$ for $n \geq 1$. Equivalently,  
 $h_n\to h$   weakly  in $L^2(E)$, and
$$\sup_n \|C\mu_n\| < \infty. \quad \lim_{n \rightarrow \infty} C\mu_n(z) = C\mu(z),\,\,z\in \D.$$  Now by passing to a subsequence and the Ces\`aro means of that subsequence we can assume that $h_n \to  h$ in the $L^2$-norm. Finally using another subsequence we may also  assume that  $h_n\to h$  pointwise a.e. on $E$. Let $I_b$ be the inner factor of $b$. Since $C\mu_n/I_b\in
N^+(\D)$, it follows by  Vinogradov's theorem (\cite[Theorem 6.5.1]{cauchytransform}) that $\{C\mu_n/I_b\}_n$ is a bounded sequence in $\C$ converging pointwise on $\D$ to  $C\mu/I_b$.  This implies  weak-* convergence in $\C$, in particular, $C\mu/I_b \in C \subset N^+(\D)$, and consequently, $C\mu/b \in N^+(\D)$. Moreover,   we have a.e. on $E$ that  $C\mu_n/b = h_n/\sqrt{1-|b|^2}$ which converges pointwise to $h/\sqrt{1-|b|^2}$, hence we conclude that the sequence $C\mu_n$ converges  in measure to some function $\phi$ on $E$. Finally, if $p \in (0,1)$ and $\|\cdot\|_p$ denotes the $H^p$-norm, then \begin{eqnarray*}
		\int_\T \log^+(|C\mu_n(re^{it})|) dt  \lesssim \int_\T |C\mu_n(re^{it})|^p dt \lesssim \|C\mu_n\|^p_p \lesssim \sup_n \|C\mu_n\| < \infty.
	\end{eqnarray*} Thus the assumptions of \thref{ostrowski} are  satisfied, and so (a subsequence of)  $C\mu_n$ converges a.e.  on $E$ to $C\mu$. This clearly implies $C\mu/b=h/\sqrt{1-|b|^2}$ a.e. on $E$, i.e.
$(C\mu, h) \in S$.
\end{proof}

We are now ready to complete the proof of the main theorem.

\begin{proof}[Proof of \thref{theorem}]
	Let $J$ denote the embedding in \thref{normformula}. Based on the pairing described at the beginning of this section,  a direct application of  \thref{normformula}  gives
 $$J(\A \cap \Hb) = \cap_{h \in H^2} \ker l_h,$$ where the functionals $l_h$ are identified with elements of $\C \oplus L^2(E)$ as
	$$l_h = \Big(hb, h\sqrt{1-|b|^2}\Big).$$
It is a consequence of the Hahn-Banach theorem that the annihilator $J(\A \cap \Hb)^\perp$ is the weak-* closure of the set of the functionals $l_h$. Since  for all $h \in H^2$ we have $l_h \in S$, the set considered in  \thref{wsclosed},  by the lemma we conclude  that  $J(\A \cap \Hb)^\perp \subset S$. Thus if $f\in \Hb$ is orthogonal to  $\A \cap \Hb$, we must have  $Jf \in S$, that is  $$Jf = (hb, h\sqrt{1-|b|^2})$$ for some $h \in H^2$.  But then by \thref{normformula}, $Jf \in J(\Hb)^\perp$, which gives $Jf = 0$ and the proof is complete.
\end{proof}

\bibliographystyle{siam}
\bibliography{mybib}

\Addresses

\end{document}